%
%
%

\documentclass[reqno]{amsart}

\input xy
\xyoption{all}      
\usepackage{epsfig}
\usepackage{color}
\usepackage{amsthm}
\usepackage{amssymb}
\usepackage{amsmath}
\usepackage{amscd}
\usepackage{amsopn}
\usepackage{url}
\usepackage{hyperref}\hypersetup{colorlinks}

\usepackage{mathabx}

\usepackage{eufrak}


\usepackage{color} 

\definecolor{darkred}{rgb}{1,0,0} 
\definecolor{darkgreen}{rgb}{0,0.8,0}
\definecolor{darkblue}{rgb}{0,0,1}

\hypersetup{colorlinks,
linkcolor=darkblue,
filecolor=darkgreen,
urlcolor=darkred,
citecolor=darkgreen}

%
%
%
%

\numberwithin{equation}{section}

\newcommand{\labell}[1] {\label{#1}}


\numberwithin{equation}{section}
\newtheorem {Theorem}{Theorem}
\numberwithin{Theorem}{section}

\newtheorem {Lemma}[Theorem]    {Lemma}

\newtheorem {Proposition}[Theorem]{Proposition}

\theoremstyle{definition}

\theoremstyle{remark}
\newtheorem{Remark}[Theorem]{Remark}
\newtheorem{Example}[Theorem]{Example}

%

\expandafter\chardef\csname pre amssym.def at\endcsname=\the\catcode`\@
\catcode`\@=11
\def\undefine#1{\let#1\undefined}
\def\newsymbol#1#2#3#4#5{\let\next@\relax
 \ifnum#2=\@ne\let\next@\msafam@\else
 \ifnum#2=\tw@\let\next@\msbfam@\fi\fi
 \mathchardef#1="#3\next@#4#5}
\def\mathhexbox@#1#2#3{\relax
 \ifmmode\mathpalette{}{\m@th\mathchar"#1#2#3}%
 \else\leavevmode\hbox{$\m@th\mathchar"#1#2#3$}\fi}
\def\hexnumber@#1{\ifcase#1 0\or 1\or 2\or 3\or 4\or 5\or 6\or 7\or 8\or
 9\or A\or B\or C\or D\or E\or F\fi}

\font\teneufm=eufm10
\font\seveneufm=eufm7
\font\fiveeufm=eufm5
\newfam\eufmfam
\textfont\eufmfam=\teneufm
\scriptfont\eufmfam=\seveneufm
\scriptscriptfont\eufmfam=\fiveeufm

\catcode`\@=\csname pre amssym.def at\endcsname


\def    \eps    {\epsilon}

\newcommand{\EE}{{\mathcal E}}
\newcommand{\FF}{{\mathcal F}}

\newcommand{\CA}{{\mathcal A}}

\newcommand{\CS}{{\mathcal S}}

\newcommand{\A}{{\mathcal A}}

\newcommand{\PP}{{\mathcal P}}

\def    \F      {{\mathbb F}}
\def    \HH     {{\mathbb H}}

\def    \C      {{\mathbb C}}
\def    \R      {{\mathbb R}}

\def    \T      {{\mathbb T}}

\def    \12    {{\frac{1}{2}}}

\def    \tr     {\operatorname{tr}}

\def    \SB     {\operatorname{SB}}

\def    \Sp     {\operatorname{Sp}}
\def    \SU     {\operatorname{SU}}
\def    \SO     {\operatorname{SO}}
\def    \Spin     {\operatorname{Spin}}

\def    \H     {\operatorname{H}}

\def    \CL     {\operatorname{CL}}

\def    \ssminus        {\smallsetminus}

\newcommand{\Cl}{{\mathit Cl}}


\begin{document}


\setlength{\smallskipamount}{6pt}
\setlength{\medskipamount}{10pt}
\setlength{\bigskipamount}{16pt}





\title[Arnold Conjecture for Clifford Symplectic Pencils]{Arnold
  Conjecture for Clifford Symplectic Pencils}

\author[Viktor Ginzburg]{Viktor L. Ginzburg}
\author[Doris Hein]{Doris Hein}

\address{Department of Mathematics, UC Santa Cruz,
Santa Cruz, CA 95064, USA}
\email{ginzburg@ucsc.edu}
\email{dhein@ucsc.edu}

\subjclass[2000]{53D40, 32Q15}

\date{\today} \thanks{The work is partially supported by the NSF and
by the faculty research funds of the University of California, Santa
Cruz.}

\begin{abstract} 
  We establish a version of the Arnold conjecture, both the degenerate
  and non-degenerate cases, for target manifolds equipped with Clifford
  pencils of symplectic structures and the domains (time-manifolds)
  equipped with frames of divergence--free vector fields meeting a
  certain additional requirement.  This result generalizes the original
  work on the hyperk\"ahler Arnold conjecture by Hohloch, Noetzel and
  Salamon for three-dimensional time and also the previous work by the
  authors.
\end{abstract}

\maketitle

\tableofcontents

\section{Introduction}
\labell{sec:intro}
We prove an analog of the Arnold conjecture in the setting where the
one-dimensional time, $S^1$ or $\R$, is replaced by a
multi-dimensional time manifold $M$. To be more specific, the time
manifold $M$ is now any closed manifold equipped with a frame of
diversion-free vector fields satisfying a certain regularity
requirement.  The target manifold $W$ (the space) is equipped with a
Clifford pencil (i.e., a linear space with some additional properties)
of symplectic structures.  The space of null-homotopic maps from $M$
to $W$ carries a suitably defined action functional and, in the spirit
of the classical Arnold conjecture, the number of its critical points
is bounded from below by a certain topological invariant of $W$. As in
the Hamiltonian Arnold conjecture (or in Morse and Ljusternik--Schnirelman
theories), this is the sum of Betti numbers in the non-degenerate case
and the cup-length plus one for general Hamiltonians.

Our result generalizes and builds on the results of Hohloch, Noetzel
and Salamon, \cite{HNS}, and the previous results of the authors,
\cite{GH}. In \cite{HNS}, a similar version of the Arnold conjecture
is established in the non-degenerate case for a flat hyperk\"ahler
target manifold and the time manifold $\T^3$ or $\SU(2)$ by means of
Floer theory.  In \cite{GH}, the degenerate case is treated as well
and, furthermore, for $M=\T^r$, the proof of the Arnold conjecture
results is extended to all flat target manifolds carrying a Clifford
pencil of symplectic structures.  Hence, the main new point of the
present paper is a further extension of the argument. We now consider
any closed time manifold equipped with a regular divergence-free
frame, replacing the Lie group structures of $\SU(2)$ and $\T^r$ with
translation-invariant frames. The regularity requirement 
is simply the condition that the critical points of the action
functional for the zero Hamiltonian are exactly the constant maps.

Our proof differs crucially from the Floer theoretic argument in
\cite{HNS} and technically from the argument in \cite{GH}. As in
\cite{GH}, it relies on a finite--dimensional approximation method
combined with Morse or Ljusternik--Schnirelman theory for generating
functions along the lines of \cite{CZ}.  However, in contrast with
previous works using a similar technique with Lie groups serving as time
manifolds (as, e.g., in \cite{CZ,GH}), we do not explicitly determine
the matrix representations for the (unperturbed) $L^2$-gradient
$\partialslash$ of the action functional by means of the Fourier expansion on
$S^1$ or its counterpart for Lie groups provided by the Peter--Weyl
theorem. Instead, we use the ellipticity of $\partialslash$ to
decompose the function space into the sum of finite--dimensional eigenspaces, 
which turns out to be sufficient for the finite--dimensional
reduction method to apply.

\subsection*{Acknowledgments} 
The authors are deeply grateful to Dietmar Salamon for calling their
attention to the regularity problem. They also
would like to thank Jie Qing, Claude Viterbo, and Siye Wu for useful
discussions. A part of this work was carried out while the first
author was visiting the Institute for Advanced Study during the Symplectic
Dynamics program and he would like to thank the Institute for its warm
hospitality and support.

\section{Arnold Conjecture for Symplectic Pencils}
\label{sec:main}

\subsection{Symplectic Pencils} 
A \emph{symplectic pencil} on a
finite--dimensional, real vector space $V$ is a linear subspace
$\CS\subset \bigwedge^2 V^*$, each element of which, except of course
$0$, is a symplectic structure on $V$. Alternatively, we say that
skew-symmetric bilinear forms $\omega_1,\ldots,\omega_r$ on $V$
generate a symplectic pencil $\CS$ when all non-zero linear
combinations $\sum_l\lambda_l\omega_l$, forming $\CS\ssminus 0$, are
symplectic forms. In what follows, we call $\dim \CS$ the \emph{rank}
of the pencil and assume that $\{\omega_1,\ldots,\omega_r\}$ is a
basis of~$\CS$.

Consider, for instance, the Clifford algebra $\Cl_r$ of a
positive-definite quadratic form on $\R^r$ (We refer the reader to
\cite{LM} or, e.g., \cite[Chap.\ 2]{HP}, for a discussion of Clifford
algebras; note that here, in contrast with \cite{GH}, we use the
conventions of \cite{LM}.)  Let $V$ be a real $\Cl_r$-module, i.e., an
(orthogonal) representation of $\Cl_r$. This is simply a collection of
$r$ complex structures $J_1,\ldots,J_r$ on $V$ (corresponding to an
orthonormal basis in $\R^r$), which anti-commute and are all
compatible with the same inner product $\left<\,,\right>$.  In other
words, the operators $J_l$ are $\left<\,,\right>$-orthogonal,
$J_l^2=-I$ for all $l$, and
\begin{equation}
\label{eq:anti-comm}
J_lJ_j+J_jJ_l=0 \text{ whenever $l\neq j$}.
\end{equation}
Then the forms 
$$
\omega_l(X,Y):=\left<J_l X,Y\right>
$$
generate a symplectic pencil $\CS$ of rank $r$ on $V$. To see this,
note that $\omega=\sum\lambda_l\omega_l$ is symplectic if and only if
$J=\sum_l\lambda_l J_l$ is non-degenerate. This is the case when not
all $\lambda_l=0$, since $J^2=-(\sum_l\lambda_l^2) I$ due to
\eqref{eq:anti-comm}. In particular, $\omega\neq 0$, and hence 
$\{\omega_l\}$ is a basis of $\CS$. Thus, the rank of
$\CS$ is indeed $r$. We will refer to these pencils (equipped in
addition with the basis $\omega_l$) as \emph{Clifford} symplectic
pencils.

Among Clifford symplectic pencils are, for example, the symplectic pencils
associated with hyperk\"ahler structures. In this case, $r=3$, and the
complex structures $J_l$ are $\left<\,,\right>$-orthogonal and satisfy
the quaternionic relations, i.e., in addition to \eqref{eq:anti-comm},
we also have $J_1J_2=J_3$.

Clearly, when $V$ admits a symplectic pencil $\CS$ of rank $r$, it
also admits pencils of rank smaller than $r$, e.g., subpencils of
$\CS$. Conversely, in some instances, a given pencil can be extended
to a pencil of higher rank. For example, a Clifford pencil of rank two
extends to a hyperk\"ahler pencil by setting $J_3=J_1J_2$. As a
consequence, whenever $V$ carries a Clifford pencil $\CS$ of rank
three, it also carries a hyperk\"ahler pencil, in general different
from $\CS$, obtained by, e.g., keeping $J_1$ and $J_2$ intact and
replacing $J_3$ with $J_1J_2$. Finally note that, as is not hard to
see, symplectic pencils of rank $r$ form an open subset (possibly
empty) in the Grassmannian of $r$-dimensional linear subspaces in
$\bigwedge^2 V^*$.

A vector space $V$ admits a symplectic pencil of rank $r$ if and only
if the unit sphere in $V$ admits $r$ point-wise linearly independent
vector fields.  To see this, let us view a symplectic pencil $\CS$ as
a pencil of non-degenerate Poisson structures on the dual space
$V^*$. Fix a positive definite quadratic form $K\colon V^*\to R$. Let
$X_l$ be the Hamiltonian vector field of $K$ on the sphere $\Sigma=\{
K=1\}\subset V^*$ with respect to the Poisson structure
$\omega_l$. Then the non-degeneracy of $\sum_l\lambda_l\omega_l$
readily implies that the vector fields $X_l$ are point-wise linearly
independent on $\Sigma$. This shows that the rank of $\CS$ is no
greater than the number of linearly independent vector fields on
$\Sigma$. The opposite inequality comes from Clifford pencils, cf.\
\cite[Chap.\ 12]{Hu}.  Finally, recall how the maximal value $r$ of
linearly independent vector fields on the sphere in $V$ is determined
by the dimension of $V$.  Let $\dim V= 2^{4d+c}b$, where $d\geq 0$ and
$0\leq c\leq 3$ are integers and $b$ is odd. Then $r=8d+2^c-1$; see,
e.g., \cite[Chap.\ 12 and 16]{Hu} and, in particular, pp.\ 152--154
therein.

\begin{Remark} 
  One can construct examples of symplectic pencils which are not
  isomorphic, in the obvious sense, to Clifford pencils. Moreover, a
  dimension count suggests, although we do not have a complete proof
  of this fact, that when $r\geq 3$, the collection of non-Clifford
  pencils contains a set which is open and dense in the space of all
  symplectic pencils. (When $r=2$, the situation appears to be more
  complicated.  It is not inconceivable that Clifford pencils of rank
  two are structurally stable, i.e., every pencil close to such a
  Clifford pencil is also Clifford.)  Pencils of linear symplectic
  structures also arise in the study of manifolds equipped with fat
  fiber bundles introduced in \cite{We:fat} or of fat distributions; see
  \cite[Section 5.6]{Mo} and references therein, and also \cite{FZ}.
  Pencils of complex structures are considered in, e.g., \cite{MS,Jo}.
\end{Remark}

We will revisit the Clifford condition in Section \ref{sec:Clifford}.

\subsection{Action Functional} 
In this section, we will first define the action functional under the
conditions needed for the proof of our main theorem, the Arnold
conjecture, and then discuss the role of these conditions
in Section \ref{sec:discussion}.

\subsubsection{Definition of the Action Functional.} 
\label{sec:action-def}
Throughout this subsection, we will assume that $\omega_1,\ldots,\omega_r$ is a basis
of a Clifford symplectic pencil $\CS$ on $V$ as above. 

Let $W$ be a smooth compact quotient of $V$ by a group of
transformations preserving all $\omega\in \CS$ and the product
$\left<\,,\right>$. (As a consequence, the complex structures $J_l$ are
also preserved.) For instance, $W$ can be the quotient of $V$ by a
lattice. (There are, however, other examples; see, e.g., \cite[p.\
2548]{HNS}.)

Furthermore, let us fix a closed manifold $M$ equipped with a volume
form $\mu$ and a frame $v=\{v_1,\ldots,v_r\}$ of divergence-free
vector fields, i.e., $r$ divergence--free vector fields
$v_i$, $i=1,\ldots,r$, forming a basis of the tangent space at every point
of $M$. The manifold $M$ takes the role of time in Hamiltonian
dynamics.

By analogy with Hamiltonian dynamics, a Hamiltonian is a time-dependent function on the target space, i.e., a smooth function
$$
H\colon M\times W\to \R.
$$
The action functional $\A_H$ is real--valued on the space $\EE$ of
$C^\infty$-smooth (or just $C^2$), null-homotopic maps $f\colon M\to
W$. As in \cite{GH}, we introduce $\A_H$ in two steps. First, let
$F\colon [0,\, 1]\times M\to W$ be a homotopy between $f$ and the
constant map. This is an analog of a capping in the definition of the
standard Hamiltonian action functional. The unperturbed action
functional is
$$
\A(f)=-\sum_l \int_{[0,\, 1]\times M} F^* \omega_l \wedge i_{v_l}\mu.
$$
It is routine to check that $\A(f)$ is well-defined, i.e., independent
of $F$. (Here, it would be sufficient to assume that, e.g., the
universal covering of $W$ is contractible.) Finally, the total or
perturbed action functional is
\begin{equation}
\label{eq:action}
\A_H(f)=\A(f)-\int_M H(f)\mu.
\end{equation}
For instance, when $r=1$ and $M=\T^1$, we obtain the ordinary action
functional of Hamiltonian dynamics. In the case of a hyperk\"ahler
quotient $W$ and $M=\T^3$ or $\SU(2)$, this action functional
coincides, up to a sign, with the ones defined in~\cite{HNS}.

The differential of $\A$ at $f\in\EE$ is 
$$
(d\A)_f (w)=\sum_l\int_M \omega_l(L_{v_l}f,w)\mu,
$$
where $w\in T_f\EE$ is a vector field along $f$. Thus, the gradient of
$\A$ with respect to the natural $L^2$-metric on $\EE$ is a Dirac type operator
$$
\nabla_{L^2} \A (f) =\sum_l J_l L_{v_l} f =: \partialslash f.
$$
Note that, under our assumptions on $v_l$ and $\omega_l$, this
operator is elliptic and self-adjoint; see Proposition
\ref{prop:ellipticity}. This fact is absolutely crucial for the proof
of the main theorem of the paper (Theorem \ref{thm:main}). In what
follows, when the dependence of $\partialslash$ on the frame
$v=\{v_1,\ldots,v_r\}$ is essential, we will use the notation
$\partialslash_v$.

As in \cite{Sa}, let us call the operator $\partialslash$ (or the
frame $v$ when $V$ and the pencil are fixed) \emph{regular} when the
only solutions $f\colon M\to V$ of the equation $\partialslash f=0$
are constant functions. In other words, $W$ is exactly the set of
critical points of $\CA$. This requirement, playing an important role
in the main result of the paper, is further discussed in
Section~\ref{sec:regularity}.

Regardless of the regularity condition, we have
$$
\nabla_{L^2} \A_H (f) = \partialslash f -\nabla H (f),
$$
where $\nabla H$ denotes the gradient of $H$ along $W$.  As a
consequence, the critical points of $\A_H$ are the solutions $f\in\EE$ of
the equation
\begin{equation}
\label{eq:crit-pnts}
\partialslash f =\nabla H (f).
\end{equation}

At a critical point $f$ of $\A_H$, the Hessian $d^2_f \A_H$ is defined
in the standard way as the second variation of $\A_H$. This is a
quadratic form on $T_f\EE$ equal to the $L^2$-pairing with the
linearization of $\nabla_{L^2} \A_H$ at $f$.  We call $f$ a
non-degenerate critical point when this operator $T_f\EE\to T_f\EE$ is
one-to-one, cf.\ \cite[p.\ 2559]{HNS}. A Hamiltonian $H$ is said to be
non-degenerate when all critical points of $\A_H$ are
non-degenerate. In our setting, non-degeneracy is a generic condition
on $H$, i.e., the set of non-degenerate Hamiltonians is residual in
$C^\infty(M\times W)$. (The proof in \cite[p.\ 2574--2576]{HNS}
carries over to our setting with straightforward modifications.)

\subsubsection{Requirements.}
\label{sec:discussion}
Clearly, the action functional $\CA_H$ is defined for any manifold $W$
equipped with $r$ symplectic (or even closed) $2$-forms and a closed
manifold $M$ equipped with a volume form $\mu$ and $r$
divergence--free vector fields. (Of course, as in the case of the
classical Hamiltonian action functional, $\CA_H$ can be multivalued
unless $W$ meets some additional requirements or, in other words, its
value on $f$ may depend on the choice of capping $F$.) However, in
such a general setting, the action functional is probably of little
interest.  From our perspective, one meaningful condition to impose on
$\CA_H$ or, equivalently, $\CA$ is that the operator $\partialslash$
should be elliptic.
To analyze what is essential for this requirement to hold, we focus on
the manifolds $W$ and $M$ separately.

Assume first that the vector fields $v_i$ form a basis at every point
of $M$. Then the operator $\partialslash$ is elliptic if and only if
the forms $\omega_l$ generate a symplectic pencil in the space of
two-forms $\Omega^2(M)$. (However, the forms need not be linearly
independent.) For instance, a manifold with a flat symplectic pencil,
i.e., the quotient $W$ of a vector space $V$ equipped with a
symplectic pencil, meets this requirement.  Note that closed manifolds
admitting (Clifford) symplectic pencils in $\Omega^2(W)$ are extremely
rare, cf. \cite[Chap.\ 21]{GHJ}. When $r\geq 2$, every such
``Clifford'' manifold is automatically hyperk\"ahler with the third
complex structure being $J_1J_2$.  The authors are not aware of any
non-flat example where $r>3$. Yet even flat examples would be of
interest in the context of the Arnold conjecture. In fact, so far all
available methods for $r\geq 2$ require $W$ to be flat; see
\cite{GH,HNS}.

Looking at the other side of the story, let us assume that the forms
$\omega_l$ form a basis of a symplectic pencil. It is easy to see that
$\partialslash$ cannot be elliptic unless the vector fields $v_l$
generate the tangent space to $M$ at every point of $M$, cf.\ the
proof of Proposition~\ref{prop:ellipticity}. However, the operator
$\partialslash$ can still be fairly close to elliptic, e.g.,
hypoelliptic, when the vector fields are bracket--generating.

\subsection{Arnold Conjecture}
Let now $W$ and $M$ be as in Section \ref{sec:action-def}. Namely, $W$
is the quotient of a vector space $V$ equipped with a Clifford pencil
of rank $r$ by a group of transformations preserving the pencil and
the inner product $\left<\,,\right>$; and let $M$ be a closed
$r$-manifold with a volume form $\mu$ and a frame
$v=\{v_1,\ldots,v_r\}$ of divergence-free vector fields.

Denote by $\CL(W)$ the cup-length of $W$, i.e., the maximal number of
elements in $\H_{*>0}(W;\F)$ such that their cup-product is not equal
to zero,  maximized over all fields $\F$.  Likewise, let $\SB(W)$
(the sum of Betti numbers) stand for $\sum_j \dim_\F \H_j(W;\F)$,
maximized again over all $\F$.

In the spirit of the Arnold conjecture, we have

\begin{Theorem}
\label{thm:main}
Let $M$ and $W$ be as above, and furthermore assume that
$\partialslash_v$ is regular.  Then for any Hamiltonian $H$, the
action functional $\A_H$ has at least $\CL(W)+1$ critical points. If
$H$ is non-degenerate, the number of critical points is bounded from
below by $\SB(W)$.
\end{Theorem}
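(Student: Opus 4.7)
The plan is to reduce the variational problem for $\A_H$ to a finite-dimensional one via a Lyapunov--Schmidt (``saddle-point'') argument in the spirit of Conley--Zehnder \cite{CZ}, and then invoke standard Morse and Ljusternik--Schnirelman theory on the resulting generating function. The main new ingredient, compared to \cite{CZ,GH}, is to carry out the reduction using only the abstract eigenspace decomposition of $\partialslash_v$, rather than an explicit Fourier or Peter--Weyl diagonalization.

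First, I would lift to the universal cover $V\to W=V/\Gamma$: a null-homotopic $f\colon M\to W$ lifts to a $V$-valued map, and the deck group $\Gamma$ acts on such maps by constant translations. Since $\partialslash_v$ is elliptic and self-adjoint, $L^2(M,V)$ decomposes orthogonally into finite-dimensional eigenspaces $E_\lambda$, and by regularity $E_0=V$. I would fix a large $N>0$ and split
$$
L^2(M,V) \,=\, V \,\oplus\, E^-_N \,\oplus\, E^+_N \,\oplus\, E^\perp_N,
$$
with $E^-_N=\bigoplus_{-N\leq\lambda<0}E_\lambda$, $E^+_N=\bigoplus_{0<\lambda\leq N}E_\lambda$, and $E^\perp_N$ collecting the remaining eigenspaces; on $E^\perp_N$ the inverse $\partialslash_v^{-1}$ has operator norm at most $1/N$.

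Next, writing $f=c+x+y$ with $c\in V$, $x\in E^-_N\oplus E^+_N$, $y\in E^\perp_N$, I would project \eqref{eq:crit-pnts} onto the two factors. Because $\nabla H$ is Lipschitz with a constant depending only on $\|H\|_{C^2}$ on the compact $M\times W$, for $N$ sufficiently large the map $y\mapsto \partialslash_v^{-1}P^\perp_N\nabla H(c+x+y)$ is a contraction on $E^\perp_N$, and the implicit function theorem gives a unique smooth $y=y_N(c,x)$ solving the $E^\perp_N$-projection. Substituting back defines a generating function
$$
a_N(c,x) \,:=\, \A_H\bigl(c+x+y_N(c,x)\bigr)
$$
on the \emph{finite-dimensional} space $V\oplus E^-_N\oplus E^+_N$; its critical points correspond bijectively to those of $\A_H$, with non-degeneracy and relative Morse index preserved up to the shift $\dim E^-_N$. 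Since $H$ depends on $f$ only through its projection to $W$, the function $a_N$ is $\Gamma$-equivariant in $c$ and descends to $\bar a_N\colon W\times E^-_N\times E^+_N\to\R$.

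Finally, $\bar a_N$ is \emph{quadratic at infinity} in the linear variables, with quadratic part $\tfrac12\langle\partialslash_v x,x\rangle$ (positive-definite on $E^+_N$, negative-definite on $E^-_N$) plus a bounded perturbation coming from $H$; combined with compactness of $W$, this yields a Palais--Smale-type condition. Classical critical point theory on the closed manifold $W$ then gives the two lower bounds: Ljusternik--Schnirelman yields at least $\CL(W)+1$ critical points for arbitrary $H$, and the Morse inequalities (shifted by $\dim E^-_N$) yield at least $\SB(W)$ critical points when $H$ is non-degenerate. The hard part will be carrying out the Lyapunov--Schmidt step without the explicit spectral information used in \cite{CZ,GH}: one must work in the correct Sobolev setting so that $\nabla H$ is a Lipschitz vector field on the reduction domain, verify that the contraction on $E^\perp_N$ is uniform in $N$, and check that the quadratic-at-infinity structure survives the reduction uniformly as $N\to\infty$. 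Once these analytic points are settled, the topological counts are standard.
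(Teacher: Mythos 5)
Your proposal follows essentially the same finite--dimensional reduction scheme as the paper: decompose the function space into eigenspaces of the elliptic, self-adjoint operator $\partialslash_v$, eliminate the high eigenmodes by a contraction-mapping (Lyapunov--Schmidt) step exactly as in your equation for $y_N(c,x)$, verify that the resulting generating function is asymptotically quadratic, and invoke Morse and Ljusternik--Schnirelman theory on the finite-dimensional bundle over $W$. The contraction argument, the use of regularity to guarantee that the fiberwise quadratic part is non-degenerate, and the final topological count all coincide with the paper's proof.

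The one genuine gap is in your treatment of the general quotient $W=V/\Gamma$. You assert that the deck group $\Gamma$ acts on lifts $M\to V$ ``by constant translations'' and that the generating function $a_N$ descends to a function on the \emph{product} $W\times E^-_N\times E^+_N$. Both claims are only true when $\Gamma$ is a lattice, i.e., when $W$ is a torus. In general $\Gamma$ is a Bieberbach-type group containing elements with non-trivial linear parts (rotations preserving $\left<\,,\right>$ and the pencil); these act on $V$-valued functions by post-composition with an affine isometry, and the linear part acts non-trivially on the eigenspaces $E^\pm_N$ (though it does preserve them, since it commutes with $\partialslash_v$). Consequently the reduced function descends only to the total space of a possibly non-trivial vector bundle over $W$, not a product. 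The paper handles this by first proving the torus case, then passing to the finite covering $W'=V/\Gamma'\to W$ (where $\Gamma'\lhd\Gamma$ is the finite-index translation subgroup, $\Pi=\Gamma/\Gamma'$ finite) and taking the $\Pi$-quotient of the whole $\Pi$-equivariant construction. This is a correctable oversight -- the Morse/LS lower bounds hold equally for asymptotically quadratic functions on non-trivial vector bundles -- but as written your descent step is incorrect for non-toral $W$.

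A minor additional remark: your worry about uniformity in $N$ ``as $N\to\infty$'' is unnecessary; one fixes a single $N$ large enough for the contraction estimate and the quadratic-at-infinity bound, and the rest of the argument takes place for that fixed $N$. The paper's Lemma \ref{lemma:quadratic} makes the required estimate $|R|+\|\nabla R\|<\|\nabla\Phi_0\|$ outside a compact set explicit, and works entirely in $L^2$.
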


This theorem generalizes several previous results. Namely, when $r=1$ and $M=S^1$, this is the original Arnold conjecture as proved in \cite{CZ}.
When the
target space $W$ is hyperk\"ahler and the domain is either $M=\SU(2)$
or $M=\T^3$, the non-degenerate case of this theorem was originally
proved in \cite{HNS} using a version of Floer theory.  When
$M=\SU(2)$ and $W$ is hyperk\"ahler or when $M$ is an arbitrary torus
and $W$ is as above, both the degenerate and the non-degenerate case
were established in \cite{GH} by means of a finite--dimensional
reduction method originating from \cite{CZ} and similar, up to some
technical details, to the one used here.


\begin{Remark}
  The proof of the theorem does not, in fact, make use of the
  requirement that $\CS$ is a Clifford pencil (beyond some notational
  aspects), and thus the theorem holds for arbitrary symplectic
  pencils. However, in all cases where $\partialslash$ is regular
  known to the authors the pencil is Clifford; see Section
  \ref{sec:regularity}.
\end{Remark}

\begin{Remark}
\label{rmk:non-compact}
As in \cite{GH}, Theorem \ref{thm:main} extends to non-compact
quotients $W$ of $V$ without any significant changes in the
proof. However, in the non-compact case,  certain restrictions must be imposed on the
behavior of the Hamiltonian $H$ at infinity and the lower bounds on
the number of critical points may possibly depend on $H$. For
instance, let us assume that a finite covering $W'$ of $W$ is a
Riemannian product of a flat torus and a Euclidean space $V'$,
e.g., $W$ is an iterated cotangent bundle of a flat
manifold. Then it suffices to require the lift of $H$ to $M\times W'$
to coincide, outside a compact set, with a non-degenerate quadratic form
on $V'$ with constant coefficients. In this case, the lower bounds on
the number of critical points are again $\CL(W)+1$ and, respectively,
$\SB(W)$.
\end{Remark}

\section{Regularity and the Clifford Condition}
\label{sec:regularity+Clifford}

\subsection{Examples of Regular Frames}
\label{sec:regularity}
The analogy between the standard Dirac operator (see \cite{Hi,LM}) and the
operator $\partialslash_v$ suggests that $\partialslash_v$ need not be
regular in general, unless the frame $v$ meets additional
requirements. For $r=3$, the regularity of $\partial_v$ is studied in
\cite{Sa}.  In this section, without attempting to analyze the
regularity question in detail or depth, we briefly discuss some further
examples where regularity or lack thereof can be 
established as a consequence of some other results.

\begin{Example}[Torus]
\label{ex:torus}
Let $M$ be the torus $\T^r$ equipped with the standard volume
form and let the frame $v$ be formed by vector fields with constant
coefficients. (It suffices to assume that the vector fields commute:
$[v_i,v_j]=0$ for all $i$ and $j$.) Then $\partialslash_v$ is regular
for any Clifford pencil.

Indeed, clearly, $\ker\partialslash$ contains the constant functions.
To show that all functions in the kernel are constant, we consider the
operator
\[
\partialslash^2=-\Delta+2\sum_{i<j}J_iJ_j L_{\left[v_i,v_j\right]}=-\Delta,
\]
where $\Delta:=\sum L_{v_l}^2$.  This operator, in contrast with
$\partialslash$, is scalar -- applying $\partialslash^2$ to a function
$f$ amounts to applying it to the components $u$ of $f$ with respect
to some basis in $V$. Furthermore, $\partialslash^2=-\Delta$ is
(second order) elliptic and self-adjoint and the maximum principle
holds for scalar solutions $u\colon M\to\R$ of the equation
$\partialslash^2u=0$, see \cite[Section 6.4]{Ev}. Since the domain $M$
of $u$ is a compact manifold without boundary, $u$ must be
constant. To summarize, once $\partialslash^2 f=0$, every component of
$f$ is a constant function, and hence $f$ is constant. Since, clearly,
$\ker \partialslash^2\supset \ker \partialslash$ (in fact, the two
kernels coincide), we conclude that
$\ker\partialslash$ contains only constant functions.
\end{Example}

\begin{Example}[$\SU(2)$, following \cite{Sa}]
\label{ex:SU(2)}
Let us identify $\SU(2)$ with the unit sphere in the space of
quaternions $\HH$ and let $v$ be the left-invariant frame such that
$v_1=\mathbf{i}$, $v_2=\mathbf{j}$, and $v_3=\mathbf{k}$ at the
identity.  Set $V=\HH$ with 
$J_1=\mathbf{i}$, $J_2=\mathbf{j}$ and $J_3=\mathbf{k}$. Then
$\partialslash_v$ is regular; see \cite{Sa} and also \cite{GH,HNS}.

On the other hand, if we take $v_1=2\mathbf{i}$, $v_2=-\mathbf{j}$
and $v_3=-\mathbf{k}$ at the identity, keeping the rest of the data
the same as above, the operator $\partialslash_v$ fails to be regular. Indeed, as
is observed in \cite{Sa}, the natural inclusion $f\colon \SU(2)\to \HH$
satisfies the equation $\partialslash_v f=0$.
\end{Example}

\begin{Example}[Compact Lie Groups]
\label{ex:Lie-gps}
Let $G$ be a compact $r$-dimensional Lie group with Lie algebra
$\mathfrak{g}$. Fix a bi-invariant metric on $G$ and assume that the
adjoint action lifts to a homomorphism
$G\to\Spin(\mathfrak{g})$. Among the Lie groups with this property are
tori, $\SU(n)$, $\SO(2n)$, the quaternionic unitary groups $\Sp(n)$,
and of course all simply connected compact groups; see, e.g.,
\cite{Sl}. Let $v$ be a left-invariant frame, which is orthonormal
with respect to the bi-invariant metric. (For instance, the frames from
Example \ref{ex:torus} and the first frame considered in Example
\ref{ex:SU(2)} meet this requirement, but the second frame in Example
\ref{ex:SU(2)} does not.)  Then $\partialslash_v$ is regular for any
Clifford symplectic pencil.

To see this, first note that under our conditions on $v$, the operator
$\partialslash_v$ can be identified with the standard Dirac operator
(see, e.g., \cite{LM} for the definition) with respect to the
connection $\nabla$ on $G$ which is compatible with the bi-invariant
metric and for which the frame $v$ is parallel, i.e., $\nabla_\xi
v_i=0$ for all $i=1,\ldots,r$ and all tangent vectors $\xi$; see
\cite{Sl}. (This is not the Levi--Civita connection: $\nabla$ is not
torsion--free unless $G$ is abelian. In fact,
$T(\xi,\eta)=-[\xi,\eta]$, \cite{Sl}, and clearly $\nabla$ is flat.)

By ``additivity'', it suffices to prove the result in the case where
$V$ is an irreducible Clifford module. Furthermore, complexifying $V$
and extending $J_i$'s to $V\otimes\C$ in a complex--linear fashion, we
may assume that $V$ is a complex irreducible representation of
$\Cl_r$.  (When $r$ is even $V$ is unique and when $r$ is odd there are two such
representations; \cite{LM}.)  In this setting the
regularity of the Dirac operator for $\nabla$ is established in
\cite{Sl}.
\end{Example}

\begin{Example}[Generic Regularity]
\label{ex:generic_regularity}
Let us fix $V$ and a Clifford pencil of rank $r$ on $V$. Assume that
$M$ admits a regular frame $v$. Then the set of regular frames
is open and dense in the $C^\infty$-topology in the collection of all
divergence-free frames on $M$.

The
statement that regular frames form an open set is a consequence of the
fact that surjectivity of a Fredholm operator is an open condition.
To show that regular frames are dense, we argue similarly to the proof
of \cite[Lemma 1.3]{Sa}. Let $w$ be an arbitrary frame, which we may
assume to be non-regular. Then we claim that the frame $w+\eps v$ is
regular for small $\eps>0$.  Consider the kernel $K$ of the operator
$\partialslash_w$ on the space of $W^{1,2}$-maps $M\to V$ with zero
mean. Since $v$ is regular, the quadratic form
$\left<f,\partialslash_v f\right>$ is non-degenerate on $K$.  Set
$A(s)=\partialslash_{w+\eps v}$.  The claim follows now from
\cite[Lemma A.2]{Sa} asserting, roughly speaking, that an operator
in a family of self-adjoint operators $A(s)$ is bijective
whenever the quadratic form $\big<f,\dot{A}(0)f\big>$ is
non-degenerate on $\ker A(0)$ and $s>0$ is sufficiently small. 
\end{Example}

\begin{Remark}
  It is interesting to point out that the choice of a pencil plays no
  role in Examples \ref{ex:torus}--\ref{ex:generic_regularity}. In
  fact, we do not have any example of a frame that would be regular
  for some pencils and non-regular for some others.
\end{Remark}

\subsection{Clifford Condition Revisited}
\label{sec:Clifford}

We conclude this discussion with a reformulation of the Clifford
condition, which, we feel, better illuminates it from the
symplectic--geometrical point of view. Namely, we claim that a
symplectic pencil $\CS$ is Clifford if and only if there exists an
inner product $\left<\,,\right>$ on $V$ which is compatible (up to a
factor) with all non-zero forms $\omega\in \CS$. To be more precise,
given an inner product $\left<\,,\right>$ and a two-form $\omega$,
denote by $A_\omega$ the skew-symmetric matrix uniquely determined by
the condition that $\omega(X,Y)=\left<A_\omega X,Y\right>$ for all $X$
and $Y$ in $V$.  We say that $\left<\,,\right>$ is compatible with a
symplectic pencil $\CS$ if for every $\omega\in \CS$ we have
$A_\omega^2=-\lambda I$ for some $\lambda\geq 0$ depending on
$\omega$. In other words, every non-zero form $\omega\in\CS$ becomes
compatible with $\left<\,,\right>$ in the standard sense after, if
necessary, a rescaling. 
Note that when $\omega\neq 0$ we automatically have $\lambda>0$ since
 in this case $\omega$ is non-degenerate.

\begin{Proposition}
\label{prop:metric-Clifford}
A symplectic pencil $\CS$ is Clifford if and only if there exists an
inner product $\left<\,,\right>$ which is compatible with $\CS$.
\end{Proposition}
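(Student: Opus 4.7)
The plan is to prove both implications. The forward direction---$\CS$ Clifford $\Rightarrow$ existence of a compatible inner product---is immediate from the definitions. Given a Clifford basis $\omega_l(X,Y)=\left<J_l X,Y\right>$ with the $J_l$ being $\left<\,,\right>$-orthogonal, squaring to $-I$, and satisfying \eqref{eq:anti-comm}, any $\omega=\sum_l\lambda_l\omega_l$ has $A_\omega=\sum_l\lambda_l J_l$, and the cross terms in the expansion of $A_\omega^2$ cancel in pairs by \eqref{eq:anti-comm}, giving $A_\omega^2=-\bigl(\sum_l\lambda_l^2\bigr)I$. So the same $\left<\,,\right>$ is compatible with $\CS$, with $\lambda=\sum_l\lambda_l^2\geq 0$.

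The content of the proposition is in the converse: from a compatible inner product one must manufacture a Clifford basis. The key maneuver is a polarization. Assume $A_\omega^2=-\lambda(\omega)I$ for every $\omega\in\CS$ with $\lambda(\omega)\geq 0$, and write $q(\omega):=-\lambda(\omega)$. Applied to $\omega$, $\omega'$, and $\omega+\omega'$, all of which lie in $\CS$, the identity
\[
(A_\omega+A_{\omega'})^2-A_\omega^2-A_{\omega'}^2=A_\omega A_{\omega'}+A_{\omega'}A_\omega
\]
shows that the right-hand side is also a scalar multiple of $I$. Thus there is a symmetric bilinear form $B$ on $\CS$ with $A_\omega A_{\omega'}+A_{\omega'}A_\omega=2B(\omega,\omega')I$, and $B(\omega,\omega)=q(\omega)\leq 0$. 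Strict inequality holds for $\omega\neq 0$ because every non-zero element of $\CS$ is symplectic and hence $A_\omega$ is invertible. Therefore $-B$ is a positive definite quadratic form on the $r$-dimensional space $\CS$.

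To conclude, I would pick a basis $\omega'_1,\ldots,\omega'_r$ of $\CS$ which is orthonormal with respect to $-B$ and set $J_l:=A_{\omega'_l}$. By construction $J_l J_j+J_j J_l=-2\delta_{lj}I$, which separates into $J_l^2=-I$ and the anti-commutation relation \eqref{eq:anti-comm}. The operators $J_l$ are skew-symmetric with respect to $\left<\,,\right>$, inherent in the definition of $A_\omega$, so $J_l^T J_l=-J_l^2=I$ and each $J_l$ is $\left<\,,\right>$-orthogonal. Hence $\{\omega'_l\}$ exhibits $\CS$ as a Clifford pencil with compatible inner product $\left<\,,\right>$. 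The only substantive step is the polarization argument together with the strict negativity of $q$ on $\CS\smallsetminus 0$; this strict negativity---and not just semi-negativity---is where the hypothesis that the non-zero elements of $\CS$ are genuinely symplectic (rather than merely $2$-forms with $A_\omega^2\in\R\cdot I$) is used in an essential way.
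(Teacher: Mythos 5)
Your proof is correct and follows essentially the same route as the paper: polarize the identity $A_\omega^2=-\lambda(\omega)I$ to produce a positive-definite form on $\CS$, then take an orthonormal basis to obtain the anti-commuting $J_l$. The paper reaches the same conclusion by first defining $(\omega,\eta):=-\tr(A_\omega A_\eta)$ and then carrying out the polarization as a separate computation with $\sigma=a\omega+b\eta$; your version is a mild streamlining in that polarization simultaneously yields the bilinear form $B$ and the anti-commutation relations, and you make explicit the skew-symmetry of $A_\omega$ which guarantees that the resulting $J_l$ are orthogonal.
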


\begin{proof} In one direction the assertion is obvious: a Clifford
  pencil admits an inner product compatible with it. Proving the
  converse, assume that $\left<\,,\right>$ is compatible with
  $\CS$. For $\omega$ and $\eta$ in $\CS$, set $(\omega,\eta):=-\tr
  (A_\omega A_\eta)$. Clearly, this is a bi-linear symmetric pairing
  on $\CS$, and due to the compatibility condition, $(\,,)$ is
  positive-definite: $(\omega,\omega)=\lambda>0$ when $\omega\neq
  0$. We can rewrite this as
\begin{equation}
\label{eq:norm}
A_\omega^2=-\|\omega\|^2 I,
\end{equation}
where $\|\cdot\|$ stands for the norm on $\CS$ with respect to
$(\,,)$.  In particular, $A_\omega$ is a complex structure if and only if
$\|\omega\|=1$,

To prove that $\CS$ is a Clifford pencil, it suffices to find a basis
$\omega_1,\ldots,\omega_r$ of unit symplectic forms such that the
operators $A_{\omega_i}$ anti-commute. We claim that this is true for
any orthonormal basis. To establish this, it suffices to show that
$A_\omega$ and $A_\eta$ anti-commute whenever $\omega$ and $\eta$ are
orthogonal to each other with respect $(\,,)$. Consider two such
forms, which we can assume to have unit norm, and set
$\sigma=a\omega+b\eta\in \CS$, where $a$ and $b$ are non-zero
scalars. Then, by mutual orthogonality of these forms,
$$
\|\sigma\|^2=a^2\|\omega\|^2+b^2\|\eta\|^2=a^2+b^2,
$$
and thus, by \eqref{eq:norm}, 
$$
A_\sigma^2= -(a^2+b^2)I.
$$
On the other hand, since $A_\sigma=aA_\omega+bA_\eta$, we have
$$
A_\sigma^2= -(a^2+b^2)I+ab(A_\omega A_\eta+A_\eta A_\omega).
$$
It follows that $A_\omega A_\eta+A_\eta
A_\omega=0$, i.e., $A_\omega$ and $A_\eta$ anti-commute.
\end{proof}

\section{Proof of Theorem \ref{thm:main}}
\label{sec:proofs}

The argument follows closely the finite--dimensional reduction method
of Conley and Zehnder, \cite{CZ}, and its version for
multi-dimensional time $M=\SU(2)$ or $\T^r$ introduced in \cite{GH}.
However, we do not work with an explicit expression for
$\partialslash$ obtained via Fourier analysis on a
compact Lie group (which $M$ is not) relying on the Peter--Weyl
theorem. Instead, we expand $f$ in eigenvectors of $\partialslash$ using the
ellipticity of $\partialslash$ to reduce the problem to 
finite--dimensional Morse theory for generating functions. Beyond this
point, the argument is quite standard, and hence we omit here some
straightforward, minor technical details of the proof. 

Let us first assume that $W$ is the quotient of a vector space $V$ by
a lattice and, as a consequence, $W$ is a torus. We will discuss the
modifications needed to deal with the general case at the end of the
proof. (This step is essentially identical to its counterpart in
\cite{GH}.)
 
Since $f$ is null-homotopic, it can be lifted to a map
$\tilde{f}\colon M\to V$, where only the mean value $f_0$ depends on
the lift.  In other words, here we view $\EE$ as an
infinite--dimensional vector bundle over $W$ with projection map
$f\mapsto f_0$. This vector bundle is trivial and its fiber $\FF$ is
canonically isomorphic to the space of smooth maps $M\to V$ with zero
mean. We can regard $\EE$ as a sub-bundle in $W\times L^2_0(M,V)$.

\begin{Proposition}
\label{prop:ellipticity}
The operator $\partialslash$ is elliptic on the space of $V$-valued
functions on $M$ and self-adjoint with respect to the $L^2$-inner
product on this space.
  \end{Proposition}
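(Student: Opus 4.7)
The plan is to verify ellipticity by computing the principal symbol pointwise and verify self-adjointness by a single integration by parts, each step using one of the two structural hypotheses: that $\{v_l\}$ forms a frame of divergence-free vector fields and that $\{\omega_l\}$ forms a basis of a symplectic pencil compatible with $\left<\,,\right>$.

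For ellipticity, fix $x\in M$ and a non-zero $\xi\in T^*_x M$. Since $L_{v_l}$ is a first-order differential operator with scalar symbol $i\xi(v_l(x))$ and the $J_l$ are constant endomorphisms of $V$, the principal symbol of $\partialslash$ is
\[
\sigma_{\partialslash}(\xi) \;=\; i\sum_l \xi(v_l(x))\, J_l.
\]
Setting $\lambda_l:=\xi(v_l(x))$, the fact that $v_1(x),\ldots,v_r(x)$ span $T_x M$ (the frame condition) guarantees that $(\lambda_1,\ldots,\lambda_r)\ne 0$ whenever $\xi\ne 0$. Then $\sum_l\lambda_l J_l$ is precisely the operator $A_\omega$ associated to the non-zero form $\omega=\sum_l\lambda_l\omega_l\in\CS$, which is symplectic by the pencil condition and hence invertible. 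Thus $\sigma_{\partialslash}(\xi)$ is invertible, i.e., $\partialslash$ is elliptic. (In the Clifford case one has the cleaner identity $(\sum\lambda_l J_l)^2=-(\sum\lambda_l^2)I$, but only non-degeneracy of the pencil is actually used.)

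For self-adjointness, recall that $J_l$ is orthogonal with $J_l^2=-I$, so $J_l^T=-J_l$; hence $\left<J_l X,Y\right>=-\left<X,J_l Y\right>$ pointwise on $V$. For any smooth $V$-valued functions $f,g$ on $M$ and any $l$,
\[
\left<J_l L_{v_l}f,\,g\right>_{L^2} \;=\; -\int_M \left<L_{v_l}f,\,J_l g\right>\mu.
\]
The divergence-free condition $L_{v_l}\mu=0$ gives $L_{v_l}(\phi\mu)=(L_{v_l}\phi)\mu$ for any scalar function $\phi$, and Stokes' theorem yields $\int_M L_{v_l}\phi\,\mu=0$. Applied to $\phi=\left<f,J_l g\right>$ this produces the integration-by-parts identity
\[
\int_M \left<L_{v_l}f,\,J_l g\right>\mu \;=\; -\int_M \left<f,\,L_{v_l}(J_l g)\right>\mu \;=\; -\int_M \left<f,\,J_l L_{v_l}g\right>\mu,
\]
the last equality because $J_l$ is a constant endomorphism and therefore commutes with $L_{v_l}$. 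Combining and summing over $l$,
\[
\left<\partialslash f,g\right>_{L^2} \;=\; \sum_l \int_M \left<f,\,J_l L_{v_l} g\right>\mu \;=\; \left<f,\partialslash g\right>_{L^2},
\]
which is the desired self-adjointness on the dense subspace of smooth functions, and hence on the appropriate Sobolev completion by standard elliptic regularity.

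Neither step presents a serious obstacle; the only subtlety worth flagging is the cancellation between the skew-adjointness of the $J_l$ and the sign from integration by parts against a divergence-free vector field, which conspire to make $\partialslash$ self-adjoint rather than skew-adjoint. It is exactly here that the divergence-free hypothesis on the frame is needed; any non-zero divergence term would add a symmetric zeroth-order piece (involving $\divv v_l$) that spoils self-adjointness.
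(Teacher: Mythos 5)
Your proof is correct and follows exactly the same route as the paper's: compute the principal symbol and invoke non-degeneracy of the pencil for ellipticity, and use Stokes' theorem together with the divergence-free condition for self-adjointness. The only difference is that the paper states these two steps in a single sentence each, whereas you carry out the symbol computation and the integration-by-parts cancellation in full — a welcome elaboration but not a different argument.
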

  
  \begin{proof}
    The operator $\partialslash$ is elliptic if and only if the symbol
    $\sigma(\partialslash)=\sum \lambda_l J_l$ is invertible for all
    non-zero (co)vectors $\lambda=(\lambda_1,\ldots, \lambda_r)$. 
    Essentially by definition, this is true for any metric
    $\left<\,,\right>$, since the forms $\omega_l$ generate a
    symplectic pencil. (In the Clifford case, we
    have $\sigma(\partialslash)^2=-(\sum \lambda_l^2)I$, which also
    establishes ellipticity.)  That $\partialslash$ is self-adjoint can
    be easily shown using Stokes' theorem and the fact that the
    time-manifold $M$ is closed.
  \end{proof}

  The next step in the proof of the theorem is decomposing $\EE$ into
  the eigenspaces of~$\partialslash$. The fact that the eigenvalues go
  to infinity or negative infinity is then sufficient for the
  finite--dimensional reduction to go through.  Note that we can view
  $\partialslash$ either as a linear operator on $\FF$ or as a
  fiberwise linear operator on $\EE=W\times \FF$, independent of the
  point of the base. By the regularity assumption, the
  latter operator is fiberwise non-degenerate, which will 
  be crucial for the proof of the theorem. (This is not just a
  consequence of ellipticity.)

Recall that the spectrum of a self-adjoint elliptic operator is real
and countable and the eigenvalues tend to infinity and/or negative
infinity; see, e.g., \cite[Chapter III]{LM}. Furthermore, the
eigenspaces are finite--dimensional, mutually orthogonal, and the eigenvectors form a complete orthogonal system.

Denote by $\FF_N$ the subspace in $\FF$ spanned by all eigenvectors
of $\partialslash$ for eigenvalues with absolute value not exceeding $N$ and
let $\FF_N^\perp$ be the $L^2$-orthogonal complement of $\FF_N$ in
$\FF$. Thus, $\FF_N^\perp$ is spanned
by all eigenvectors whose eigenvalues have absolute value greater than
$N$.  We can view $\EE_N:=W\times \FF_N$ as a subbundle in $\EE$.  It
will also be useful to regard $\EE$ as a vector bundle over $\EE_N$
with fiber $\FF_N^\perp$.  Denote by $\PP_N$ the (fiberwise)
$L^2$-orthogonal projection of $\EE$ onto $\EE_N$ and by $\PP_N^\perp$
the projection of $\EE=\EE_N\times\FF_N^\perp$ onto the second
component $\FF_N^\perp$.

Our goal is to show that equation \eqref{eq:crit-pnts} has at least
the desired number of solutions. Let $f=g+h$ with $g\in \EE_N$ and
$h\in \FF_N^\perp$. Clearly, $f$ satisfies \eqref{eq:crit-pnts} if and
only if
\begin{equation}
\label{eq:crit-1}
\partialslash g=\PP_N \nabla H(g+h)
\end{equation}
and
\begin{equation}
\label{eq:crit-2}
\partialslash h =\PP_N^\perp \nabla H(g+h).
\end{equation}

Let us focus on the second of these equations with $g$ fixed and both
sides viewed as functions of $h$, cf.\ \cite{CZ} and \cite{GH}. Since
$\partialslash$ is regular and by the definition of $\FF_N^\perp$, the
restriction of the operator $\partialslash$ to $\FF_N^\perp$ is
invertible and has only eigenvalues whose absolute value is greater
than $N$.  Denote the inverse of this restriction by
$\partialslash_N^{-1}$. It is clear that \eqref{eq:crit-2} is
equivalent to the equation
\begin{equation}
\label{eq:crit-3}
h =\partialslash_N^{-1} \PP_N^\perp \nabla H(g+h).
\end{equation}
Note that the right hand side is now defined for all $h$ in the $L^2$-closure
$\bar{\FF}_N^\perp$ of $\FF_N^\perp$, without any smoothness requirement. 
We claim that for sufficiently large $N$ and  for any $g\in \EE_N$, equation
  \eqref{eq:crit-3} (and hence \eqref{eq:crit-2}) has a unique
  solution $h=h(g)$ and this solution is smooth.

To show this, note first that, when $N$ is
sufficiently large, $h\mapsto \partialslash_N^{-1}
\PP_N^\perp \nabla H(g+h)$ is a contraction operator on
$\bar{\FF}_N^\perp$ with respect to the $L^2$-norm. Indeed,
$$
\|\partialslash_N^{-1} \PP_N^\perp \nabla H(g+h_1)
-\partialslash_N^{-1} \PP_N^\perp \nabla
H(g+h_0)\|
\leq O(1/N)\| \nabla H (g+h_1)-\nabla H (g+h_0)\|.
$$
(All norms are $L^2$ unless specified otherwise.) Furthermore, in obvious notation,
\begin{eqnarray*}
\| \nabla H (g+h_1)-\nabla H (g+h_0)\| &=&
\Big\| \int_0^1 \frac{d}{d s} \nabla H (g+ s h_1 +(1-s)
h_0) \, ds \Big\|\\
&\leq& \|\nabla^2 H\|_{L^\infty}\|h_1-h_0\|.
\end{eqnarray*}
Hence,
$$
\|\partialslash_N^{-1} \PP_N^\perp \nabla H(g+h_1)-\partialslash_N^{-1} \PP_N^\perp \nabla
H(g+h_0)\|
\leq O(1/N) \|h_1-h_0\|,
$$
which shows that we can indeed choose $N$ sufficiently large such that the map
$h\mapsto \partialslash_N^{-1} \PP_N^\perp \nabla H(g+h)$ is a
contraction. 

The fact that the fixed point $h=h(g)$ of this operator
is a smooth function is established by the standard
bootstrapping argument and elliptic regularity. Namely,
recall that, since $\partialslash$ is a first order elliptic operator
(see Proposition~\ref{prop:ellipticity}), a solution $h$ of the equation
$\partialslash h= y$ is of Sobolev class $H^{s+1}$ whenever $h$ and
$y$ are $H^s$; see, e.g., \cite[p.\ 193]{LM}. We have 
$\partialslash h=\PP_N^\perp\nabla H (g+h)\in L^2=H^0$, 
and therefore $h\in H^1$. Now, since $H$ and $g$ are smooth,
we also have $\PP_N^\perp\nabla H (g+h)\in H^1$, and hence $h\in H^2$,
etc.

Note also that as an immediate consequence of \eqref{eq:crit-3} and of
the fact that $\nabla H$ is bounded, since $H$ is a function on a
compact manifold, we have
\begin{equation}
\label{eq:h}
\| h(g)\|=O(1/N)\text{ and } \|\partialslash h(g)\| =O(1) \text{
  uniformly in $g$.}
\end{equation}
These estimates will be used in the proof of Lemma
\ref{lemma:quadratic} below.

From a more geometric perspective, $h(g)$ is the unique critical
point of the action functional $\A_H$ on the fiber over $g$ of the
vector bundle $\EE\to\EE_N$. Set $\Phi(g):=\A_H(g+h(g))$. In other
words, $\Phi$ is obtained from $\A_H$ by restricting the action functional
to the section $g\mapsto h(g)$ of this vector bundle, formed by the
fiber-wise critical points. Therefore, $g$ is a critical point of $\Phi$
if and only if $f=g+h(g)$ is a critical point of $\A_H$, i.e., a
solution of \eqref{eq:crit-pnts}, and every critical point of $\A_H$ is
captured in this way. It remains to show that the generating function
$\Phi$ on $\EE_N$ has the required number of critical points.

The key feature of this function is that it is asymptotically (i.e., at
infinity in the fibers of $\EE_N$) a non-degenerate quadratic
form. To be more precise, set
$$
\Phi_0(g)=\A(g)=\left<\partialslash g,g\right>_{L^2} \text{ and } R=\Phi-\Phi_0 .
$$
By definition, $\nabla \Phi_0(g)=\partialslash g$. Hence, as has been
pointed out above, the unperturbed action $\Phi_0$ is a fiberwise
non-degenerate quadratic form since $\partialslash$ is regular. (Here
and throughout the rest of the proof, the metric on $\EE_N=W\times
\FF_N$ is the product of the fiberwise $L^2$-metric and the metric on
$W$.)  Furthermore, the perturbation $R$ is small compared to $\Phi_0$
when $N$ is sufficiently large. To be more precise, we have

\begin{Lemma} Outside a compact set in $\EE_N$, we have
\label{lemma:quadratic}
\begin{equation}
\label{eq:gen-function}
|R|+\|\nabla R\|<\| \nabla \Phi_0\|.
\end{equation}
\end{Lemma}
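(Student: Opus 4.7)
\smallskip\noindent\textbf{Plan.} Since $W$ is compact, leaving every compact set in $\EE_N=W\times\FF_N$ amounts to letting $\|g\|_{L^2}\to\infty$ in the fiber $\FF_N$. The plan is to show that $\|\nabla\Phi_0(g)\|$ grows at least linearly in this fiber norm while $|R|$ and $\|\nabla R\|$ stay uniformly bounded on $\EE_N$. By regularity of $\partialslash_v$, the operator $\partialslash$ has no zero eigenvalue on $\FF$, so on the finite-dimensional invariant subspace $\FF_N$ its smallest absolute eigenvalue $\lambda_{\min}$ is strictly positive. Since constants lie in $\ker\partialslash$, the quadratic form $\Phi_0$ depends only on the fiber coordinate and $\nabla\Phi_0(g)=\partialslash g$, whence $\|\nabla\Phi_0(g)\|\geq\lambda_{\min}\|g\|_{L^2}\to\infty$.

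For the upper bound on $|R|$, I would write
\[
R(g)=\bigl[\A(g+h(g))-\A(g)\bigr]-\int_M H(g+h(g))\,\mu
\]
and exploit that $\A$ is a quadratic form with $L^2$-gradient $\partialslash$, together with the $L^2$-orthogonality of $\partialslash g\in\FF_N$ and $h(g)\in\FF_N^\perp$, to reduce the bracket to $\tfrac12\langle\partialslash h(g),h(g)\rangle$; by~\eqref{eq:h} this is bounded by $\tfrac12\|\partialslash h(g)\|\,\|h(g)\|=O(1/N)$. The Hamiltonian term is at most $\|H\|_{L^\infty}\vol(M)$ since $M\times W$ is compact, so $|R(g)|=O(1)$ uniformly in $g$.

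For $\|\nabla R\|$, the key observation is variational: $h(g)$ is a fiber-wise critical point of $\A_H|_{g+\FF_N^\perp}$, so the variation $dh(g)(\delta g)\in\FF_N^\perp$ is annihilated by $d\A_H|_{g+h(g)}$ and the chain rule yields $\nabla\Phi(g)=\PP_N\bigl(\partialslash(g+h(g))-\nabla H(g+h(g))\bigr)$. Since $\partialslash h(g)\in\FF_N^\perp$ is killed by $\PP_N$, this equals $\partialslash g-\PP_N\nabla H(g+h(g))$, together with a base-direction component proportional to the mean of $\nabla H$ over~$M$. Subtracting $\nabla\Phi_0=\partialslash g$ leaves only terms bounded by $\|\nabla H\|_{L^\infty}\sqrt{\vol(M)}=O(1)$. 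Combining the three estimates yields $|R|+\|\nabla R\|<\|\nabla\Phi_0\|$ outside a sufficiently large ball in $\FF_N$. The main technical point is the variational cancellation of $dh(g)$ in the formula for $\nabla\Phi$, which confines the large linear term $\partialslash g$ entirely to $\nabla\Phi_0$; everything else follows directly from~\eqref{eq:h}, regularity, and compactness of $M\times W$.
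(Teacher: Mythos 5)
Your proof is correct, and it takes a genuinely different and cleaner route than the paper's.

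The paper sets $R_0:=\A(g+h(g))-\A(g)=2\left<\partialslash g, h(g)\right>+\left<\partialslash h(g), h(g)\right>$, then estimates both terms and, separately, differentiates this identity term-by-term (equation \eqref{eq:dR_0}); the latter requires establishing the operator-norm bound $\|Dh(g)\|=O(1/N)$ (equation \eqref{eq:Dh}) by differentiating the fixed-point equation \eqref{eq:crit-3}, followed by a case-by-case estimate of all four resulting terms. You instead use two structural observations that bypass most of that work. First, since $\FF_N$ is a $\partialslash$-invariant subspace and $h(g)\in\FF_N^\perp$, the cross term $\left<\partialslash g,h(g)\right>$ vanishes identically (the paper's bound $O(1/N)\|\nabla\Phi_0\|$ on this term is a valid but unnecessary overestimate), so $|R_0|$ is controlled purely by $\left<\partialslash h(g),h(g)\right>$ and $|R|=O(1)$ uniformly. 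Second — and this is the real shortcut — the envelope principle: because $h(g)$ is the fiber-wise critical point, $d\A_H|_{g+h(g)}$ annihilates the $\FF_N^\perp$-valued variation $Dh(g)w$, so the $Dh(g)$-terms drop out of $\nabla\Phi$ before any estimate is needed, and one reads off $\nabla R=-\Pi_{V\oplus\FF_N}\nabla H(g+h(g))=O(1)$ directly. This renders the estimate \eqref{eq:Dh} on $\|Dh(g)\|$ unnecessary for the lemma, yields uniform $O(1)$ bounds on $|R|$ and $\|\nabla R\|$ (rather than bounds growing like $O(1/N)\|\nabla\Phi_0\|$), and then \eqref{eq:gen-function} follows just as in the paper from the linear lower bound $\|\nabla\Phi_0(g)\|\geq\lambda_{\min}\|g\|_{L^2}$ coming from regularity. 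The only thing your shortcut gives up is the explicit rate information encoded in \eqref{eq:Dh}, which the paper also uses later to relate non-degeneracy of critical points of $\Phi$ to those of $\A_H$; for the present lemma, however, your argument is complete and sharper.
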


\begin{Remark}
  Lemma \ref{lemma:quadratic}, combined with the
  non-degeneracy of $\Phi_0$, implies that the
  critical set of $\Phi$ is a compact subset of $\EE_N$. As a
  consequence, the set of solutions of \eqref{eq:crit-pnts} (the
  critical points of $\A_H$) is also compact. Proving this fact
  directly, 
  in the context of the hyperk\"ahler Floer theory, is a rather
  subtle problem; cf.\ \cite{HNS}. This is the reason we have chosen
  to provide a detailed proof of \eqref{eq:gen-function} below.
\end{Remark}

\begin{proof}[Proof of Lemma \ref{lemma:quadratic}]
  To establish \eqref{eq:gen-function}, note first that $H$ and
  $\nabla H$ are bounded, for $H$ is a function on a compact manifold.
  Therefore, the integral of $H$ makes a bounded contribution to $R$
  and $\nabla R$, while the right hand side of \eqref{eq:gen-function}
  grows linearly as $g\to \infty$ in the fiber due to the non-degeneracy
  of $\Phi_0$.  Thus, we can ignore $H$ in \eqref{eq:gen-function} and
  only need to estimate the growth of the difference
\begin{equation}
\label{eq:R_0}
R_0:=\A(g+h(g))-\A(g)=2\left<\partialslash g, h(g)\right>+
\left<\partialslash h(g), h(g)\right>,
\end{equation}
or, to be more precise, of $|R_0|$ together with $\| \nabla R_0\|$. (It
is worth pointing out that this is not equivalent to proving the lemma
in the case where $H=0$. Even when the contributions of $H$ and
$\nabla H$ to $R$ and $\nabla R$ are ignored, $H$ still enters the
problem via the map $h$ which depends on $H$.)

To estimate the growth of $|R_0|$ and $\|\nabla R_0\|$, first observe that 
\begin{equation}
\labell{eq:R0}
| R_0(g) |\leq O(1/N)\big(\|\nabla\Phi_0(g)\|+1\big).
\end{equation}
(Recall our convention from above that all norms, here and below, are $L^2$ unless specified otherwise.) This
is an immediate consequence of \eqref{eq:h}.

In a similar vein, it is not hard to prove that
\begin{equation}
\label{eq:grad-R0}
\| \nabla R_0(g)\|\leq
O(1) + O(1/N)\|\nabla \Phi_0 (g)\|
\end{equation}
by observing that the derivative of the function $g\mapsto h(g)$ is
uniformly bounded by a constant $O(1/N)$, as can be shown by
differentiating \eqref{eq:crit-3} with respect to $g$. (See the
calculation below.) Combining the upper bounds \eqref{eq:R0} and
\eqref{eq:grad-R0} with the fact that $\|\nabla\Phi_0(g)\|$ grows
linearly with $g$ since $\Phi_0$ is non-degenerate, we see that
\eqref{eq:gen-function} holds outside a compact set.

We conclude the argument by giving a detailed proof of
\eqref{eq:grad-R0}. Differentiating \eqref{eq:R_0}, 
we have
\begin{equation}
\label{eq:dR_0}
\begin{split}
dR_0(g)(w) &= 2\left<\partialslash w, h(g)\right>
+2\left<\partialslash g, Dh(g) w\right>\\
& + \left<\partialslash (Dh(g)w), h(g) \right>
+ \left<\partialslash h(g), Dh(g) w \right>,
\end{split}
\end{equation}
where $w\in T_g\EE_N$. To prove \eqref{eq:grad-R0}, we will bound
every term on the right hand side of \eqref{eq:dR_0}.

\emph{The first term:} Using \eqref{eq:h} and the fact that
$\partialslash$ is 
self-adjoint, we have
\begin{equation}
\label{eq:1}
|\left<\partialslash w, h(g)\right>|=|\left<w,\partialslash h(g)\right>|
\leq O(1)\| w\|.
\end{equation}

To deal with the remaining terms, we need to obtain an upper bound on
the norm of the operator $Dh(g)\colon T_g\EE_N\to T_{h(g)}\EE$. By \eqref{eq:crit-3},
 we have, in obvious notation,
\begin{align*}
Dh(g)w &=\frac{d}{ds}\partialslash_N^{-1} \PP_N^\perp \nabla
H\big(g+sw+h(g+sw)\big)
\Big|_{s=0}\\
& = \partialslash_N^{-1} \PP_N^\perp \nabla^2 H\big(g+h(g)\big) w
+ \partialslash_N^{-1} \PP_N^\perp \nabla^2 H\big(g+h(g)\big) Dh(g)w.
\end{align*}
Hence,
$$
\big[I-\partialslash_N^{-1} \PP_N^\perp \nabla^2
H\big(g+h(g)\big)\big] Dh(g)w=
\partialslash_N^{-1} \PP_N^\perp \nabla^2 H\big(g+h(g)\big) w,
$$
and thus
$$
Dh(g)w=\big[I-\partialslash_N^{-1} \PP_N^\perp \nabla^2
H\big(g+h(g)\big)\big]^{-1}
\partialslash_N^{-1} \PP_N^\perp \nabla^2 H\big(g+h(g)\big) w.
$$
Note that as in \eqref{eq:h}, since $H$ is bounded together with all
its derivatives, we have
$$
\left\|\partialslash_N^{-1} \PP_N^\perp \nabla^2
  H\big(g+h(g)\big)\right\|=O(1/N)
$$
uniformly in $g$. Therefore,
\begin{equation}
\label{eq:Dh}
\| Dh(g)w\|=O(1/N) \| w\|.
\end{equation}
Now we are in a position to bound the remaining terms.

\emph{The second term:} Recall that
$\nabla \Phi_0(g)=\partialslash g$. Using \eqref{eq:Dh}, we have
\begin{equation}
\label{eq:2}
|\left< \partialslash g,Dh(g)w\right>|\leq O(1/N)\|\nabla
\Phi_0(g)\|\cdot\|w\|.
\end{equation}

\emph{The third and the fourth terms:} These two terms are equal;
for $\partialslash$ is self-adjoint. Thus, by \eqref{eq:h} and
\eqref{eq:Dh}, we have
\begin{equation}
\label{eq:34}
|\left<\partialslash (Dh(g)w, h(g) \right>|
=|\left<\partialslash h(g), Dh(g) w \right>|\leq
O(1/N)\|w\|.
\end{equation}
Combining the estimates \eqref{eq:1}, \eqref{eq:2} and \eqref{eq:34} for
 the individual terms with
\eqref{eq:dR_0}, we obtain the desired estimate \eqref{eq:grad-R0}.  
This concludes the proof of the lemma.
\end{proof}

An argument similar to the proof of Lemma \ref{lemma:quadratic} shows
that a critical point $g$ of $\Phi$ is non-degenerate if and only if
$f=g+h(g)$ is a non-degenerate critical point of~$\A_H$.

Finally, recall that whenever $\Phi=\Phi_0+R$ is a function on the total
space of a vector bundle over an arbitrary closed manifold $W$ such
that $\Phi_0$ is a fiberwise non-degenerate quadratic form and
\eqref{eq:gen-function} holds, the function $\Phi$ has at least
$\CL(W)+1$ critical points. Moreover, when $\Phi$ is Morse, the number of
critical points is bounded from below by $\SB(W)$. This is a
standard fact and we refer the reader to \cite{CZ} for the original
proof and to, e.g., \cite{We} for a different argument.

The general case, where $W$ is the quotient of $V$ by a group $\Gamma$,
is treated exactly as in \cite{GH}. Namely, first recall that $\Gamma$
contains a finite--index subgroup $\Gamma'$ consisting only of parallel
transports, \cite[p.\ 110]{Wo}.  Thus $W'=V/\Gamma'$ is a torus and
the projection $W'\to W$ is a covering map with a 
finite group of deck transformations, $\Pi=\Gamma/\Gamma'$. The
previous argument applies to the natural lift of the problem to $W'$
and the entire construction is $\Pi$-equivariant. As a result, we
obtain a vector bundle $\EE'_N\to W'$ equipped with a $\Pi$-action
covering the $\Pi$-action on $W'$ and a $\Pi$-invariant function
$\Phi'$ on $\EE'_N$, which is asymptotically quadratic at infinity,
i.e., $\Phi'$ satisfies \eqref{eq:gen-function}. The critical points of
$\A_H$ for the original problem correspond to the $\Pi$-orbits of the
critical points of $\Phi'$.  The quotient $\EE_N=\EE'_N/\Pi$ is a
vector vector bundle over $W$ and the function $\Phi'$ descends to a
function $\Phi$ on $\EE_N$. (The quotient $\EE_N$ and the function
$\Phi$ are smooth, for the $\Pi$-action on $\EE'_N$ is free as an
action covering a free action on $W'$.)  The critical points of $\Phi$
are in one-to-one correspondence with the critical points of $\A_H$
for the original problem and $\Phi$ is also asymptotically quadratic at
infinity. The theorem now follows as before from the lower bounds on
the number of critical points of $\Phi$.


\begin{thebibliography}{BEHWZ}


\bibitem[CZ]{CZ}
C. Conley, E. Zehnder, 
The Birkhoff--Lewis fixed point theorem and a conjecture of V.I. Arnold,
\emph{Invent.\ Math.}, \textbf{73} (1983), 33--49.

\bibitem[Ev]{Ev} L.C. Evans,
\emph{Partial Differential Equations}, Graduate Studies in Mathematics, vol.\ 19,
Second edition, AMS 2010.

\bibitem[FZ]{FZ} L. Florit, W. Ziller, Topological obstructions to
  fatness, Preprint 2010, arXiv:1001.0967.
 
 \bibitem[GH]{GH} V. Ginzburg, D. Hein, 
 Hyperk\"ahler Arnold Conjecture and its Generalizations, 
Preprint 2011, arXiv:1105.0874; to appear in \emph{Internat.\ J. Math.}
 
\bibitem[GHJ]{GHJ} M. Gross, D. Huybrechts, D. Joyce, \emph{Calabi--Yau
    Manifolds and Related Geometries}, Lectures at a Summer School in
  Nordfjordeid, Norway, June 2001, Springer-Verlag, Berlin, Heidelberg,
  2003.
 
\bibitem[Hi]{Hi} N. Hitchin, Harmonic spinors, \emph{Advances in
    Math.}, \textbf{14} (1974), 1--55.

\bibitem[HNS]{HNS} S. Hohloch, G. Noetzel, D. Salamon, Hypercontact
  structures and Floer theory, \emph{Geom.\ Topol.}, \textbf{13}
  (2009), 2543--2617.

\bibitem[HP]{HP}
J.-S. Huang, P. Pand\v zi\'c, \emph{Dirac Operators in Representation 
Theory}, Birkh\"auser Boston, Inc., Boston, MA, 2006.

\bibitem[Hu]{Hu}
D. Husemoller, \emph{Fibre Bundles}, Springer-Verlag, New York, 1994.

\bibitem[Jo]{Jo}
D. Joyce, Manifolds with many complex structures, \emph{Quart.\
  J. Math.\ Oxford Ser.\ (2)}, \textbf{46} (1995), 169--184.

\bibitem[LM]{LM} H.B. Lawson, M.-L. Michelsohn, \emph{Spin Geometry},
  Princeton Mathematical Series, 38, Princeton University Press,
  Princeton, NJ, 1989.

\bibitem[Mo]{Mo} R. Montgomery, \emph{A Tour of Subriemannian
    Geometries, their Geodesics and Applications}, Mathematical
  Surveys and Monographs, vol.\ 91, American Mathematical Society,
  Providence, RI, 2002.

\bibitem[Sa]{Sa} D. Salamon, The three dimensional Fueter equation and
  divergence free frames, Preprint 2012, arXiv:1202.4165.

\bibitem[Sl]{Sl}
S. Slebarski, Dirac operators on a compact Lie group, \emph{Bull.\ 
London Math.\ Soc.}, \textbf{17} (1985), 579--583.

\bibitem[MS]{MS}
A. Moroianu, U. Semmelmann,
Clifford structures on Riemannian manifolds, Preprint 2009,
arXiv:0912.4207.

\bibitem[We1]{We:fat} A. Weinstein, Fat bundles and symplectic
  manifolds, \emph{Adv.\ in Math.}, \textbf{37} (1980), 239--250.

\bibitem[We2]{We}
A. Weinstein,  
$C^{0}$ perturbation theorems for symplectic fixed
points and Lagrangian intersections, in \emph{South Rhone seminar on
geometry, III (Lyon, 1983)}, pp.\ 140--144, \emph{Travaux en Cours}, 
Hermann, Paris, 1984.

\bibitem[Wo]{Wo}
J. Wolf,
\emph{Spaces of Constant Curvature}, Fifth edition,
Publish or Perish, Inc., Houston, TX, 1984.

\end{thebibliography}
\end{document}